\documentclass[reqno]{amsart}

\usepackage[absolute,overlay]{textpos}
\usepackage{eso-pic}
\usepackage[a4paper,margin=1.25in]{geometry}
\usepackage{mathrsfs}
\usepackage{amsfonts,  amsmath}
\usepackage{graphicx}
\usepackage{enumerate,  enumitem,  amscd}
\usepackage{amsthm,  amssymb,  epsfig,  hyperref,  amsmath}  
\usepackage{mathtools}
\usepackage{tikz}
\usepackage{tikz-cd}
\usetikzlibrary{positioning, arrows,automata,arrows.meta}
\usepackage{quiver}
\usepackage{subcaption}
 
\tikzset{
  vx/.style  = {circle, draw=black, fill=blue!20, line width=0.6pt,
                minimum size=8pt, inner sep=0pt},
  ed/.style  = {draw=black!60, line width=0.5pt},
  lbl/.style = {font=\tiny},
  ttl/.style = {font=\small}
}

\newcommand{\pres}[3]{\textnormal{#1} \langle #2 \mid #3 \rangle}

\newcommand{\ZZ}{\mathcal{Z}}

\newcommand{\Z}{\mathbb{Z}}

\newcommand{\cF}{\mathcal{F}}

\newcommand{\B}{\mathbf{B}}

\DeclareMathOperator{\SL}{SL}

\DeclareMathOperator{\Mod}{Mod}
\DeclareMathOperator{\Out}{Out}

\newtheorem{theorem}{Theorem} 
\newtheorem*{theorem*}{Theorem} 
\numberwithin{theorem}{section}
\newtheorem{lemma}[theorem]{Lemma}     

\newtheorem*{corollary*}{Corollary}
\newtheorem{proposition}[theorem]{Proposition}
\newtheorem*{proposition*}{Proposition}

\newtheorem*{question*}{Question}
\numberwithin{question}{section}

\theoremstyle{definition}

\numberwithin{example}{section}

\newtheorem*{remark*}{Remark}

\begin{document}

\title[Braid groups are not profinitely rigid]{Braid groups are not profinitely rigid}
\author{Carl-Fredrik Nyberg-Brodda}
\address{June E Huh Center for Mathematical Challenges, Korea Institute for Advanced Study (KIAS), Seoul 02455, Korea}
\email{cfnb@kias.re.kr}

\thanks{The author is supported by KIAS Individual Grant HP094701 at Korea Institute for Advanced Study, and by the Mid-Career Researcher Program (RS-2023-00278510) through the National Research Foundation of the Government of Korea.}

\date{\today}

\keywords{}
\subjclass[2020]{}

\begin{abstract} 
We show that the $n$-strand braid group $\B_n$ is not profinitely rigid when $n \geq 4$. We do this by exhibiting a family of pairwise non-isomorphic, finitely presented residually finite groups with the same profinite completion as $\B_n$. This answers a well-known problem negatively, and gives the first known examples of non-profinitely rigid mapping class groups. 
\end{abstract}

\maketitle

\noindent A finitely generated, residually finite group $G$ is said to be (absolutely) profinitely rigid if it is determined amongst all finitely generated, residually finite groups by its set of finite quotients. In recent years, a number of results have demonstrated profinite rigidity arising naturally in many settings, with the most spectacular being examples of profinitely rigid hyperbolic $3$-manifold groups \cite{BMRS}. Many other classes, including free-by-cyclic groups, have since been studied from the point of view of profinite rigidity \cite{Bridson2017, Hughes2025}. A natural question, explicitly noted as an open problem by Andrew, Hensel, Hughes \& Wade \cite[pp.\ 9--10]{Andrew2026}, concerns the profinite rigidity of mapping class groups, including the famous \textit{braid groups} $\B_n$, being the mapping class groups of the $n$-punctured disk. In relation to this, finite quotients of braid groups have seen a good deal of study in recent years, see e.g.\ \cite{Scherich2023, Kolay2023}. In this note, we answer the question of the profinite rigidity of braid groups negatively. Specifically, we show:

\begin{theorem*}
For $n \geq 4$, the $n$-strand braid group $\B_n$ is not profinitely rigid. 
\end{theorem*}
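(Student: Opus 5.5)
The plan is to realise $\B_n$ as a mapping torus of its commutator subgroup and then twist the monodromy by a power coprime to the order of its outer class. Let $\ell\colon \B_n\to\Z$ be the abelianisation, $\sigma_i\mapsto 1$, and let $K=\B_n'=\ker\ell$. For $n\ge 4$ the group $K$ is finitely generated, indeed finitely presented (Gorin--Lin). Hence
\[
\B_n\cong K\rtimes_\varphi\Z ,
\]
where $\varphi$ is conjugation by $\sigma_1$ restricted to $K$. Put $m=n(n-1)$. The full twist $\Delta^2$ is central and $\ell(\Delta^2)=m$, so $c=\sigma_1^{-m}\Delta^2\in K$. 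Therefore $\varphi^m$ is conjugation by $c^{-1}$, an inner automorphism of $K$, and the class $[\varphi]$ has finite order dividing $m$ in $\Out(K)$. For $k$ coprime to $m$ define
\[
G_k=K\rtimes_{\varphi^k}\Z ,
\]
so that $G_1=\B_n$. Each $G_k$ is finitely presented. It is residually finite because it contains $K\times\Z$ with finite index (the $m$-th power of the monodromy is inner), and $K\le\B_n$ is residually finite.

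The profinite completions are handled by the standard Baumslag-type lemma: if $K$ is finitely generated and $[\varphi]$ has finite order $m$ in $\Out(K)$, then $K\rtimes_\varphi\Z$ and $K\rtimes_{\varphi^k}\Z$ have isomorphic profinite completions whenever $\gcd(k,m)=1$. I would prove it directly. Write $\widehat{G_k}=\widehat K\rtimes_{\varphi^k}\widehat\Z$. Choose $\kappa\in\widehat\Z^\times$ with $\kappa\equiv k\pmod m$ and $\kappa$ a unit. Because $\varphi^m$ is inner by $c^{-1}$, the element $\varphi^\kappa\in\Out(\widehat K)$ makes sense and agrees with $\varphi^k$ there. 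The substitution $t\mapsto t^\kappa\cdot(\text{correction in }\widehat K)$ then gives a continuous isomorphism $\widehat{G_1}\to\widehat{G_k}$. The correction term comes from lifting $\varphi^{\kappa-k}=\mathrm{inn}(c^{-(\kappa-k)/m})$, and uses that $c$ commutes with $\sigma_1$, which follows since $\Delta^2$ is central.

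The main obstacle is showing that the $G_k$ are pairwise non-isomorphic for enough $k$, say for $k$ running over residues in $(\Z/m)^\times/\{\pm1\}$; this set is nontrivial for $n\ge4$, for example $m=12$ gives $\{1,5\}$. I would use rigidity of the fibre. An isomorphism $G_k\to G_{k'}$ must carry $K$ to $K$, and I would aim to show that $K$ is characteristic, or at least that any isomorphism preserves the fibre. Here the key input is that $\Z$-quotients of $G_k$ are controlled because $H_1(G_k)=\Z\oplus H_1(K)_{\varphi^k}$, where the second summand is finite and equal to the one for $\B_n$. Granting this, the isomorphism induces $\alpha\in\mathrm{Aut}(K)$ with $\alpha\varphi^k\alpha^{-1}=\varphi^{\pm k'}$ in $\Out(K)$. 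It then remains to show that $[\varphi]^k$ and $[\varphi]^{\pm k'}$ are not conjugate in $\Out(K)$ unless $k\equiv\pm k'\pmod m$.

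For this last step I would compute $\mathrm{Aut}(K)$ via $\mathrm{Aut}(\B_n)$: automorphisms of $K$ should extend to the normaliser, and Dyer--Grossman give $\Out(\B_n)=\Z/2$. From this the normaliser of $\langle[\varphi]\rangle$ in $\Out(K)$ should act on $\langle[\varphi]\rangle\cong\Z/m$ only through $\pm1$. If the order of $[\varphi]$ in $\Out(K)$ is a proper divisor $m'$ of $m$, I would replace $m$ by $m'$ throughout. This step, and in particular establishing that $[\varphi]$ has order large enough that $(\Z/m')^\times\neq\{\pm1\}$, is where I expect most of the work to lie.
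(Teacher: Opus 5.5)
Your construction and the profinite half are sound, and in fact you have found the paper's groups. The kernel of $G_{n,r}\to\Z$, $x_i\mapsto r$, $z\mapsto N$, is a copy of $\B_n'$, and $x_1^{k}z^{j}$ with $rk+Nj=1$ is a stable letter acting by $\varphi^{k}$. So $K\rtimes_{\varphi^k}\Z\cong G_{n,r}$ with $r\equiv k^{-1}\pmod N$, where your $m$ is the paper's $N$. Reparametrising $\widehat K\rtimes\widehat\Z$ by a unit of $\widehat\Z$ is a legitimate substitute for the paper's route via $G_{n,r}\times\Z\cong G_{n,s}\times\Z$ and Baumslag's cancellation. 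Your residual finiteness argument is also fine.

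The gap is non-isomorphism, which is the entire content of the theorem. Equal profinite completions prove nothing until you show that some $G_k$ is not isomorphic to $\B_n$, and here you only offer a plan whose key steps are prefixed by ``should''. The reduction to $\Out(K)$ is fine: $\varphi^k$ and $\varphi$ generate the same cyclic group of automorphisms of $H_1(K)$, so $H_1(G_k)\cong\Z$ and $K$ is characteristic. But the inputs you then invoke do not give the conjugacy statement. That automorphisms of $K=\B_n'$ come from $\mathrm{Aut}(\B_n)$ is a substantial rigidity statement, which you neither prove nor cite; it is false for $n=3$, where $K$ is free of rank two. Dyer--Grossman's $\Out(\B_n)=\Z/2$ is silent precisely where it is needed. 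An automorphism of $Q_n=\B_n/\ZZ(\B_n)$ lifts to $\B_n$ only if it preserves the class of the central extension $\B_n\to Q_n$ up to sign. An isomorphism $G_k\to\B_n$ would instead induce an automorphism of $Q_n$ carrying the class of $G_k$ to that of $\B_n$, and these are exactly the automorphisms you must exclude. You also leave the order of $[\varphi]$ in $\Out(K)$ open; were it $1,2,3,4$ or $6$, your family would collapse to a single isomorphism class.

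The missing idea is to apply rigidity to the central quotient rather than to the fibre. An isomorphism $\theta\colon G_k\to G_{k'}$ preserves $K$ and the centre $\langle t^mc^k\rangle$. Since $G_k/\ZZ(G_k)\cong Q_n$ via $t\mapsto\bar\sigma_1^{k}$, $\theta$ induces some $\beta\in\mathrm{Aut}(Q_n)$. Because $\theta(t)\in t'^{\pm1}K$, you get $\beta(\bar\sigma_1^{k})\equiv\bar\sigma_1^{\pm k'}$ modulo the image of $K$, which is $[Q_n,Q_n]$. It therefore suffices that every automorphism of $Q_n$ acts on $H_1(Q_n)\cong\Z/m$ by $\pm1$, which forces $k\equiv\pm k'\pmod m$. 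This is what Bell--Margalit supply: automorphisms of $Q_n$ are induced by possibly orientation-reversing homeomorphisms of the punctured disc, hence are inner up to the mirror automorphism. It is exactly the input the paper uses. After conjugating, $\theta(x_i)$ is $y_i^{\pm1}$ times a central element, and the relation $(x_1\cdots x_{n-1})^n=z^r$ then forces $r\equiv\pm s\pmod N$. With this input your argument closes, without needing $\mathrm{Aut}(K)$ or the order of $[\varphi]$; without it, the proposal does not prove the theorem.
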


This gives the first known examples of non-profinitely rigid mapping class groups. Our methods do not seem to give any insight into the profinite rigidity of other mapping class groups, and e.g.\ the profinite rigidity of $\Out(F_n)$ and $\Mod(\Sigma_g)$ for genus $g$ surfaces remain open problems.

\section{}

\noindent In this section, we briefly recall the essential definitions which will be used throughout. Let $G$ be a finitely generated residually finite group, and denote by $\cF(G)$ the set of all finite quotients of $G$, given the structure of an inverse system in the natural way. The profinite completion $\widehat{G}$ of $G$ is the inverse limit of this system. The group $G$ is said to be \textit{profinitely rigid} if $\widehat{G} \cong \widehat{H}$ implies $G \cong H$ for any other finitely generated, residually finite group $H$. The condition $\widehat{G}_1 \cong \widehat{G}_2$ is equivalent to $\cF(G_1) = \cF(G_2)$, i.e.\ that $G_1$ and $G_2$ have the same finite quotients \cite{Dixon1982, Nikolov2007}. Thus profinite rigidity asks whether the set of finite quotients of a group distinguishes it among all other finitely generated, residually finite groups. In recent years, many results for proving profinite rigidity have appeared via hyperbolic geometry in work by Bridson, McReynolds, Reid \& Spitler \cite{BMRS, BMRS21}.

On the other hand, one of the oldest results for proving \textit{non-}profinite rigidity is due to Baumslag \cite[p. 250]{Baumslag1974}, who gave examples of non-isomorphic metacyclic groups with the same finite quotients. The key tool in his short argument is the following elegant proposition, the proof of which uses some results from the theory of group varieties. 

\begin{proposition}[Baumslag, 1974]\label{Prop:Baumslag}
If $A, B, C, D$ are finitely generated groups such that $\widehat{B} \cong \widehat{D}$, and $A \times B \cong C \times D$, then $\widehat{A} \cong \widehat{C}$, i.e.\ $A$ and $C$ have the same finite quotients. 
\end{proposition}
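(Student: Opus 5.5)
The plan is to show that $A$ and $C$ have exactly the same finite quotients, testing one finite group at a time. The idea is to replace the infinite groups by finite ``relatively free'' quotients attached to a group variety. On those finite quotients the isomorphism $A \times B \cong C \times D$ can be cancelled.

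\emph{Setting up the variety.} Fix a finite group $Q$, and let $\mathfrak{V} = \mathrm{var}(Q)$ be the variety generated by $Q$. For any group $G$ write $\mathfrak{V}(G)$ for the corresponding verbal subgroup. Then $G/\mathfrak{V}(G)$ is the largest quotient of $G$ lying in $\mathfrak{V}$, and every homomorphism from $G$ onto a group in $\mathfrak{V}$ factors through it.

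\emph{Local finiteness (main obstacle).} The key input from the theory of varieties is that $\mathfrak{V}$ is locally finite: a $k$-generated group in $\mathfrak{V}$ is finite. The relatively free group $F_k/\mathfrak{V}(F_k)$ embeds in $Q^{Q^k}$, via the product of all homomorphisms $F_k \to Q$, so it is finite. Every $k$-generated group in $\mathfrak{V}$ is a quotient of it, hence also finite. Consequently $G/\mathfrak{V}(G)$ is finite for each finitely generated $G$. This is the step I expect to need the most care, though the embedding argument above should suffice.

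\emph{Verbal subgroups and direct products.} Verbal subgroups are compatible with direct products: $\mathfrak{V}(G_1 \times G_2) = \mathfrak{V}(G_1) \times \mathfrak{V}(G_2)$. The inclusion $\supseteq$ holds because each factor lies in the product's verbal subgroup. For $\subseteq$, note that the right-hand side is normal and the quotient by it, $G_1/\mathfrak{V}(G_1) \times G_2/\mathfrak{V}(G_2)$, lies in $\mathfrak{V}$. An isomorphism maps the verbal subgroup of one group onto that of the other, so $A \times B \cong C \times D$ induces
\[
A/\mathfrak{V}(A) \times B/\mathfrak{V}(B) \;\cong\; C/\mathfrak{V}(C) \times D/\mathfrak{V}(D),
\]
and all four factors are finite by local finiteness.

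\emph{Comparing the $B$ and $D$ factors.} Next I show $B/\mathfrak{V}(B) \cong D/\mathfrak{V}(D)$ using $\widehat{B} \cong \widehat{D}$. The group $B/\mathfrak{V}(B)$ is a finite quotient of $B$, hence a finite quotient of $D$ by the hypothesis. Since it lies in $\mathfrak{V}$, the surjection from $D$ factors through $D/\mathfrak{V}(D)$. Symmetrically, $D/\mathfrak{V}(D)$ is a quotient of $B/\mathfrak{V}(B)$. Two finite groups that are each a quotient of the other have equal orders, so the surjections are isomorphisms.

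\emph{Cancelling the common factor.} Now cancel this common finite factor $Y$. If $X \times Y \cong Z \times Y$ with all groups finite, then for every finite $W$ we have
\[
|\mathrm{Hom}(W,X)|\,|\mathrm{Hom}(W,Y)| = |\mathrm{Hom}(W,Z)|\,|\mathrm{Hom}(W,Y)|.
\]
Since $|\mathrm{Hom}(W,Y)| \geq 1$, we get $|\mathrm{Hom}(W,X)| = |\mathrm{Hom}(W,Z)|$ for all $W$. By Lovász's theorem this forces $X \cong Z$. Hence $A/\mathfrak{V}(A) \cong C/\mathfrak{V}(C)$.

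\emph{Conclusion.} Since $Q \in \mathfrak{V}$, $Q$ is a quotient of $A$ if and only if it is a quotient of $A/\mathfrak{V}(A)$. That holds if and only if $Q$ is a quotient of $C/\mathfrak{V}(C)$, which holds if and only if $Q$ is a quotient of $C$. As $Q$ was an arbitrary finite group, $\cF(A) = \cF(C)$, i.e.\ $\widehat{A} \cong \widehat{C}$.
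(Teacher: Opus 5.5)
Your proof is correct and is essentially Baumslag's variety-theoretic argument, which the paper cites rather than reproduces: pass to the finite quotients $G/\mathfrak{V}(G)$ for the locally finite variety generated by a given finite group, split verbal subgroups over direct products, identify the $B$- and $D$-factors, and cancel. Using Lov\'asz's homomorphism-counting theorem for the cancellation step is fine, though the Krull--Remak--Schmidt theorem for finite groups does the same job more directly.
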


A direct corollary of his result, also used in the construction of non-profinitely rigid metacyclic groups in \cite{Baumslag1974}, is that if $A \times \Z \cong C \times \Z$, then $A$ and $C$ have the same set of finite quotients. This is precisely the tool we will use in the proof of our Theorem. We now turn to this proof.

\clearpage

\section{}

\noindent \textit{Proof of Theorem.} Fix $n \geq 4$, and let $\B_n$ be the $n$-strand braid group, with standard generators $\sigma_1, \dots, \sigma_{n-1}$. The center $\ZZ(\B_n)$ of $\B_n$ is cyclic, and is generated by the full twist element $\Delta^2 := (\sigma_1 \cdots \sigma_{n-1})^n$. Let $N = n(n-1)$, which is the image of $\Delta^2$ under the abelianization map $\B_n \xrightarrow{\operatorname{ab}} \Z$. Let $Q_n = \B_n / \langle \Delta^2 \rangle$ be the quotient of $\B_n$ by its center. For any unit $r \in (\Z / N \Z)^\times$, we then define the groups $G_{n,r}$ by 
\begin{equation}\label{Eq:gnr}
G_{n,r}=\left\langle x_1, \dots, x_{n-1}, z \ \middle|\
\begin{aligned}
&[x_i, x_j] = 1 \quad (|i-j|>1), \qquad x_i x_{i+1} x_i = x_{i+1} x_i x_{i+1}, \\
&(x_1 \cdots x_{n-1})^n=z^r, \qquad \qquad\quad\qquad  z \text{ central}\\
\end{aligned}
\right\rangle .
\end{equation}
This is a fibre product of the two homomorphisms of $Q_n \xrightarrow{p_1} \Z / N\Z$ (passing via the abelianization map and multiplying by $r$) and $\Z \to \Z / N\Z$ (reduction modulo $N$). We note that $\ZZ(G_{n,r}) = \langle z \rangle$, and that $G_{n,r} / \ZZ(G_{n,r}) \cong Q_n$. Furthermore, it is easy to verify that $G_{n,1} \cong \B_n$. Indeed, if $r=1$, then the second row of relations in \eqref{Eq:gnr} simply asserts that the element $\Delta^2 = (x_1 \cdots x_{n-1})^n = z$ is central and that we can remove this redundant generator, leaving the usual presentation of $\B_n$. We will show that the groups $G_{n,r}$ all have the same profinite completion as $\B_n$. 

Since $G_{n,1} \cong \B_n$, the remainder of the proof now reduces to proving two lemmas: 

\begin{lemma}\label{Lem:not-isomorphic}
If $r, s \in (\Z / N \Z)^\times$, then $G_{n,r} \cong G_{n,s}$ if and only if $r \equiv \pm s \pmod{N}$. 
\end{lemma}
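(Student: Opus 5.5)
We prove the two directions separately; the ``only if'' direction is the substantial one.

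\emph{The ``if'' direction.} We use the automorphisms of $Q_n$ and the freedom to replace $z$ by $z^{-1}$.

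If $s \equiv r \pmod N$, then $G_{n,r} = G_{n,s}$. Indeed, $G_{n,r}$ is the fibre product of $Q_n \to \Z/N\Z$ (the abelianization times $r$) with $\Z \to \Z/N\Z$, and this depends only on $r$ modulo $N$. Concretely, writing $s = r + kN$, the substitution $z \mapsto z\,\mathrm{ab}(\cdot)$-corrections can be checked directly. More simply, we define $G_{n,r}$ via the fibre product description, so it depends only on the class of $r$.

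For $s \equiv -r$, send $x_i \mapsto x_i$ and $z \mapsto z^{-1}$. This turns the relation $(x_1\cdots x_{n-1})^n = z^r$ into $(x_1\cdots x_{n-1})^n = z^{-r}$, which gives an isomorphism $G_{n,r} \cong G_{n,-r}$.

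\emph{The ``only if'' direction.} Suppose $\phi\colon G_{n,r} \to G_{n,s}$ is an isomorphism. We proceed in four steps.

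First, since $\ZZ(G_{n,r}) = \langle z\rangle$, the map $\phi$ sends $z \mapsto z^{\epsilon}$ with $\epsilon = \pm 1$. It therefore induces an automorphism $\bar\phi$ of $Q_n = G/\ZZ(G)$.

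Second, we compare abelianizations. The abelianization of $G_{n,r}$ is the fibre product $\{(a,c) \in \Z^2 : ra \equiv c \pmod N\}$, via $x_i \mapsto (1,r)$ and $z \mapsto (0,N)$. Rather than using this directly, we use a cleaner invariant. Consider the map $H_1(G) \to H_1(Q_n) = \Z/N\Z$ together with the image of the centre $\langle z\rangle$ in $H_1(G)$.

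Third, we use the key input: $\mathrm{Out}(\B_n)$ is $\Z/2$ for $n \geq 4$ (Dyer--Grossman), generated by inversion $\sigma_i \mapsto \sigma_i^{-1}$. One expects, and must verify, that every automorphism of $Q_n$ acts on $H_1(Q_n) = \Z/N\Z$ by multiplication by $\pm 1$. For $n \geq 4$ these mapping-class-group-type quotients are rigid: $Q_n$ is a finite-index subgroup of $\Mod(S_{0,n+1})$, and its automorphisms come from the extended mapping class group by Ivanov-type results. Extended mapping classes preserve the conjugacy class of the half-twists $\bar\sigma_i$ up to inversion, so $\bar\phi(\bar x_1)$ is conjugate to $\bar x_1^{\delta}$ with $\delta = \pm 1$.

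Fourth, lift to $G$. We get $\phi(x_i) = g_i x_{j}^{\delta} g_i^{-1} z^{m_i}$ for suitable $j$ and integers $m_i$. Since all the $x_i$ are conjugate in $G_{n,r}$, the exponents $m_i$ equal a common value $m$. Apply $\phi$ to the relation $(x_1\cdots x_{n-1})^n = z^r$ and pass to the abelianization of $G_{n,s}$, where $x \mapsto (1,s)$ and $z \mapsto (0,N)$. The left side gives $N\cdot(\delta(1,s) + m(0,N))$ and the right side gives $r\epsilon(0,N)$. Comparing first coordinates forces $\delta N = 0$, which is impossible. So a more careful bookkeeping is needed: the images of $x$ and $z$ generate $\Z^2$ with the single relation built in.

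The cleaner way to see it: in $H_1(G_{n,r}) \cong \Z^2$, the class of $z$ is $(0,N)$ and the class of $x$ is $(1,r)$ up to index. The relation says $N[x] = r[z]$ in $H_1$. Applying $\phi_*$ gives $N(\delta[x] + m[z]) = r\epsilon[z]$. Substituting $N[x] = s[z]$, this becomes $(\delta s + mN - r\epsilon)[z] = 0$. Since $[z]$ has infinite order, $r \equiv \pm s \pmod N$.

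\emph{Main obstacle.} The hardest step is the third: showing that every automorphism of $Q_n$ sends the class of a standard generator to a conjugate of a standard generator, up to inversion. We would try to deduce this from the known computation of $\mathrm{Aut}(\B_n)$, or from $\mathrm{Aut}$ of the mapping class group of the $(n+1)$-punctured sphere via Korkmaz/Ivanov. This is where the hypothesis $n \geq 4$ enters.
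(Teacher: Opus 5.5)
Your proof is correct. It rests on the same key input as the paper, and the real difference is in how you finish.

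\textbf{The key input.} The paper invokes Bell--Margalit's theorem that every automorphism of $Q_n$ is induced by a homeomorphism of the punctured disk. That is exactly the statement your third step needs. Dyer--Grossman's computation of $\Out(\B_n)$ would not suffice on its own, since an automorphism of $Q_n$ is not a priori induced by one of $\B_n$. If you go instead through a Korkmaz/Ivanov-type theorem for $\Mod^{\pm}(S_{0,n+1})$, add one remark. The inducing class normalizes the point-stabilizer $Q_n$, so it fixes the distinguished puncture. Hence half-twists go to $Q_n$-conjugates of $\bar x_1^{\pm 1}$.

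\textbf{Where you differ.} The paper uses the single homeomorphism to conjugate $\theta$ into the normal form $\theta(x_i)=y_i^{\varepsilon}z_s^{m_i}$. It then equalizes the $m_i$ via braid relations and evaluates $(x_1\cdots x_{n-1})^n=z^r$ inside $G_{n,s}$, which needs the mirror identity $(y_1^{-1}\cdots y_{n-1}^{-1})^n=z_s^{-s}$. You only need each $\bar\phi(\bar x_i)$ to be conjugate to $\bar x_1^{\delta}$, and you finish in $H_1$. There conjugation is invisible and no mirror identity is needed. This also shows the lemma is equivalent to every automorphism of $Q_n$ acting on $H_1(Q_n)=\Z/N\Z$ by $\pm 1$.

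Your ``if'' direction is also slightly simpler than the paper's mirror map: reduce $r$ mod $N$, then apply $z\mapsto z^{-1}$. The first step is most cleanly done by the explicit map $x_i\mapsto y_i z^{-k}$, $z\mapsto z$ from $G_{n,r}$ to $G_{n,r+kN}$.

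\textbf{One correction.} $H_1(G_{n,r})$ is not $\Z^2$. It is $\langle [x],[z]\mid N[x]=r[z]\rangle$, which is $\cong\Z$ because $\gcd(N,r)=1$ makes $(N,-r)$ primitive. Concretely $[x]\mapsto r$ and $[z]\mapsto N$. Your assignment $x_i\mapsto(1,r)$, $z\mapsto(0,N)$ into $\Z^2$ does not respect $N[x]=r[z]$, since $(N,Nr)\neq(0,rN)$. That is what produced your spurious ``$\delta N=0$''.

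Your final computation still goes through. It only uses $N[y]=s[z]$ in $H_1(G_{n,s})$ and that $[z]$ has infinite order there, since it maps to $N\neq 0$. The same fact justifies $m_i=m_1$: from $[\phi(x_i)]=[\phi(x_1)]$ you get $(m_i-m_1)[z]=0$.
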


\begin{lemma}\label{Lem:same-quotients}
For every $r, s \in (\Z / N \Z)^\times$, the groups $G_{n,r}$ and $G_{n,s}$ have the same finite quotients.
\end{lemma}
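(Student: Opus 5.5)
The plan is to apply Baumslag's Proposition~\ref{Prop:Baumslag} in its corollary form: it suffices to show $G_{n,r} \times \Z \cong G_{n,s} \times \Z$ for all units $r,s$. The point is that the isomorphism type of $G_{n,r}$ depends on $r$ through the data ``$\Delta^2$ equals $z^r$'', and after adding an extra central $\Z$ factor the unit $r$ can be absorbed by a change of basis of the resulting rank-two central subgroup.

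First I would give a uniform description of $G_{n,r}\times\Z$. Write $\Z=\langle t\rangle$. Then $G_{n,r}\times\Z$ is generated by $x_1,\dots,x_{n-1}$ together with a central free abelian group $\langle z,t\rangle\cong\Z^2$, subject to the braid relations and $\Delta^2=z^r$ (writing $\Delta^2$ for $(x_1\cdots x_{n-1})^n$). Equivalently, it is the quotient of $\B_n\times\Z^2$ (with $\Z^2=\langle z,t\rangle$) by the central cyclic subgroup generated by $\Delta^2 z^{-r}$. Here $\Z^2 \to \Z^2$ records exponents, so the relevant datum is the vector $v_r=(1,-r)\in\Z\times\Z^2$ in the coordinates $(\Delta^2, z, t)$. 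The subgroup of $\Z^2$ generated by $(-r,0)$ is not all of $\Z^2$, so I need to be a little careful: the group is $(\B_n\times \Z^2)/\langle \Delta^2 w^{-1}\rangle$ where $w=z^r t^0$.

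The key step is to produce an automorphism $\phi$ of $\B_n\times\Z^2$ that is the identity on the $x_i$ up to central factors and carries $\Delta^2 z^{-r}$ to $\Delta^2 z^{-s}$ (or its inverse). Automorphisms of $\Z^2$ alone would need $(r,0)\mapsto(s,0)$, impossible unless $r=\pm s$. So instead I use the transvection trick: allow $x_i\mapsto x_i u$ with $u\in\Z^2$ central. This is well defined on $\B_n\times\Z^2$ since the braid relations are homogeneous. It sends $\Delta^2\mapsto \Delta^2 u^{N}$, since $\Delta^2$ has exponent sum $N$. Combined with $A\in \mathrm{GL}_2(\Z)$ acting on $\langle z,t\rangle$, the element $\Delta^2 z^{-r}$ goes to $\Delta^2\, u^N A(z)^{-r}$. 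I need $u\in\Z^2$ and $A$ with $Nu - rA e_1 = -s e_1$, i.e.\ $rAe_1 = s e_1 + N u$. Since $r$ is invertible mod $N$, pick $r'$ with $rr'\equiv s \pmod N$. The condition becomes: find a primitive vector $Ae_1=(a,b)$ with $ra\equiv s$ and $rb\equiv 0\pmod N$, i.e.\ $a\equiv r^{-1}s$ and $b\equiv0 \pmod N$, with $\gcd(a,b)=1$. Then set $u=(ra-s, rb)/N$. Choose $a$ as a lift of $r^{-1}s$ (a unit mod $N$) and $b=N$; then $\gcd(a,N)=1$, so $(a,N)$ is primitive and extends to a matrix $A\in \mathrm{GL}_2(\Z)$. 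This is exactly where the extra $\Z$ factor is needed.

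Then $\phi$ is an automorphism (its inverse is of the same shape), so it induces an isomorphism $G_{n,r}\times\Z\cong G_{n,s}\times\Z$, and Proposition~\ref{Prop:Baumslag} with $B=D=\Z$ gives the lemma. The main obstacle is the bookkeeping in verifying that $\phi$ is bijective and maps the normal subgroup $\langle\Delta^2z^{-r}\rangle$ exactly onto $\langle \Delta^2 z^{-s}\rangle$. I would check this by writing $\phi$ as a composite of the $\mathrm{GL}_2(\Z)$ automorphism with the transvection $x_i\mapsto x_iu$, whose inverse is $x_i\mapsto x_iu^{-1}$.
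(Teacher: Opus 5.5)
Your proposal is correct and follows essentially the paper's route: you prove $G_{n,r}\times\Z\cong G_{n,s}\times\Z$ by multiplying every braid generator by a common central element and applying a unimodular change of basis to the rank-two central subgroup, using the same number-theoretic input as the paper ($a\equiv r^{-1}s \pmod N$, with $(a,N)$ completed to a unimodular matrix), and then you invoke Baumslag's proposition with $B=D=\Z$. The only difference is packaging: you lift to an automorphism of $\B_n\times\Z^2$ carrying $\langle\Delta^2z^{-r}\rangle$ onto $\langle\Delta^2z^{-s}\rangle$, which makes bijectivity automatic, whereas the paper checks the defining relation on the presentation and deduces bijectivity from the induced maps on $Q_n$ and on the central $\Z^2$.
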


We first prove Lemma~\ref{Lem:not-isomorphic}. The reverse direction is easy; indeed, if $r \equiv \pm s \pmod{N}$, then write $r \equiv \varepsilon s + Nm$ for some $\varepsilon = \pm 1$ and $m \in \Z$, and define $\theta \colon G_{n,r} \to G_{n,s}$ by $\theta(x_i) = y_i^\varepsilon z_s^m$ and $\theta(z_r) = z_s$. Then $\theta$ is easily checked to be a homomorphism, and it is clearly both surjective and injective, since it induces either the identity or the mirror automorphism on $Q_n$, and it maps the center $\langle z_r \rangle$ isomorphically onto $\langle z_s \rangle$. Hence the reverse implication holds. 

We now prove the forward implication. This can likely be proved using combinatorial group theory via the presentation \eqref{Eq:gnr}; we instead do it by appealing to a topological result. Bell \& Margalit \cite[Main Theorem 3]{Bell2006} proved that any injective endomorphism $\alpha \colon Q_n \to Q_n$ is induced by a homeomorphism $h$ of the $n$-punctured disk; concretely, if $T_a$ denotes the image in $Q_n$ of a half-twist about an arc $a$, then $\alpha(T_a) = T_{h(a)}^{\pm 1}$. Let $z_r$ and $z_s$, respectively, denote the central generators of $G_{n,r}$ and $G_{n,s}$ (the element $z$ in \eqref{Eq:gnr}), let $x_1, \dots, x_{n-1}$ resp.\ $y_1, \dots, y_{n-1}$ be the other generators, and let $\pi_r \colon G_{n,r} \to Q_n$ resp.\ $\pi_s \colon G_{n,s} \to Q_n$ denote the projections. 

Assume then that $\theta \colon G_{n,r} \to G_{n,s}$ is an isomorphism. Then $\theta$ maps the characteristic subgroup $\ZZ(G_{n,r}) = \langle z_r \rangle$ to the center of $G_{n,s}$, so $\theta(z_r) = z_s^k$ for some $k = \pm 1$. It also induces an automorphism $\alpha \colon Q_n \to Q_n$ such that $\pi_s \circ \theta = \alpha \circ \pi_r$. Let $(a_1, \dots, a_{n-1})$ be the standard chain of arcs in the punctured disk, so that $T_{a_i} = \pi_r(x_i) = \pi_s(y_i)$. By the result of Bell \& Margalit cited above, there is a homeomorphism $h$ of the punctured disk and a sign $\varepsilon = \pm 1$ such that $\alpha(T_{a_i}) = T_{h(a_i)}^{\varepsilon}$ for every $i$. If $h$ preserves orientation, let $q \in Q_n$ be its mapping class. Since $q T_a q^{-1} = T_{h(a)}$, we get $\alpha(T_{a_i}) = q T_{a_i} q^{-1}$. On the other hand, if $h$ reverses orientation, then choose a reflection $\rho$ which preserves each standard arc $a_i$ setwise; then $h \rho$ preserves orientation and $(h\rho)(a_i) = h(a_9)$, so taking $q \in Q_n$ to be the mapping class of $h \rho$ we again get $T_{h(a_i)} = q T_{a_i} q^{-1}$ and so $\alpha(T_{a_i}) = q T_{a_i}^{-1} q^{-1}$. Thus in either case, there is a $q \in Q_n$ and $\varepsilon = \pm 1$ such that $\alpha(T_{a_i}) = q T_{a_i}^{\varepsilon} q^{-1}$ for all $i$. Choose a lift $\widetilde{q} \in G_{n,s}$ of $q \in Q_n$ with $\pi_s(\widetilde{q})=q$, and conjugate $\theta$ by $\widetilde{q}^{-1}$. This does not change the image of the central element of $z_r$, and the induced automorphism on $Q_n$ will now send $T_{a_i}$ to $T_{a_i}^\varepsilon$. Hence $\pi_s(\theta(x_i)) = T_{a_i}^\varepsilon = \pi_s(y_i^\varepsilon)$. Hence for every $i$, we find that
\begin{equation}\label{Eq:theta_xi}
\theta(x_i) = y_i^\varepsilon z_s^{m_i} \quad (1 \leq i \leq n-1)
\end{equation}
for some $m_i \in \Z$, since $\ker(\pi_s) = \langle z_s \rangle$. 

Thus the images $\theta(x_i)$ are very restricted, and we are nearly done. We apply $\theta$ to the braid relation $x_i x_{i+1} x_i = x_{i+1} x_i x_{i+1}$ for some $1 \leq i \leq n-2$. Using \eqref{Eq:theta_xi} and the centrality of $z_s$, we get $y_i^\varepsilon y_{i+1}^\varepsilon y_i^\varepsilon z_s^{2m_i + m_{i+1}} = y_{i+1}^\varepsilon y_{i}^\varepsilon y_{i+1}^\varepsilon z_s^{m_i + 2m_{i+1}}$. Since the braid relation also holds for the $y_i$ in $G_{n,s}$, we cancel and obtain $z_s^{2m_i + m_{i+1}} = z_s^{m_i + 2m_{i+1}}$, and since $z_s$ has infinite order thus $2m_i + m_{i+1} = m_i + 2m_{i+1}$, i.e.\ $m_i = m_{i+1}$. Since $i$ was arbitrary, we get $m_1 = \cdots = m_{n-1} = m$ for some fixed $m \in \Z$. Next, we apply $\theta$ to the relation $(x_1 \cdots x_{n-1})^n = z^r$. Since $z_s$ is central, \eqref{Eq:theta_xi} gives $\theta(x_1 \cdots x_{n-1}) = (y_1^\varepsilon \cdots y_{n-1}^\varepsilon) z_s^{(n-1)m}$, and hence, recalling $N=n(n-1)$, we get
\begin{equation}\label{Eq:image-of-Delta}
\theta( (x_1 \cdots x_{n-1})^n ) = (y_1^\varepsilon \cdots y_{n-1}^\varepsilon)^n z_s^{Nm}.
\end{equation}
We claim that $(y_1^\varepsilon \cdots y_{n-1}^\varepsilon)^n = z_s^{\varepsilon s}$. But this is easy: if $\varepsilon = 1$ then this is explicitly one of the defining relations of $G_{n,s}$, and if $\varepsilon = -1$, it follows from the fact that $(y_1^{-1} \cdots y_{n-1}^{-1})^n = z_s^{-s}$. This latter fact follows directly by using the mirror automorphism $\mu \colon \B_n \to \B_n$ mapping $\mu(\sigma_i) = \sigma_i^{-1}$ and that this maps the central element $\Delta^2 = (\sigma_1 \cdots \sigma_{n-1})^n$ to its inverse (since it reverses the exponent sum), so that $\Delta^{-2} = \mu(\Delta^2) = (\sigma_1^{-1} \cdots \sigma_{n-1}^{-1})^n$. Thus we have $(y_1^\varepsilon \cdots y_{n-1}^\varepsilon)^n = z_s^{\varepsilon s}$ for both cases of $\varepsilon = \pm 1$. Combining this with \eqref{Eq:image-of-Delta} and the earlier observation that $\theta(z_r) = z_s^k$ for some $k = \pm 1$, we get 
\[
z_s^{kr} = \theta(z_r^r) = \theta( (x_1 \cdots x_{n-1})^n) = z_s^{\varepsilon s} z_s^{Nm}
\]
and hence $kr = \varepsilon s + N m$, so that $kr \equiv \varepsilon s \pmod{N}$. Since $k\varepsilon = \pm 1$, we thus get that $r \equiv \pm s \pmod{N}$, which is what was to be shown. Thus we have proved Lemma~\ref{Lem:not-isomorphic}.

\

Next, we prove Lemma~\ref{Lem:same-quotients}. We will do this by proving that $G_{n,r} \times \Z \cong G_{n,s} \times \Z$, and then applying Baumslag's result from the introduction (Proposition~\ref{Prop:Baumslag}). The proof of this isomorphism is similar to the proof of \cite[Lemma~2]{Baumslag1974}, and mainly consists of keeping track of elementary number-theoretic information. 

We retain our earlier notation of the generators of $G_{n,r}$ resp.\ $G_{n,s}$ as $x_1, \dots, x_{n-1}, z_r$ resp. $y_1, \dots, y_{n-1}, z_s$. Let $u$ resp.\ $v$ denote the cyclic generator of the direct $\Z$-factor in $G_{n,r} \times \Z$ resp.\ $G_{n,s} \times \Z$. First, choose $a \in \Z$ such that $ar \equiv s \pmod{N}$, which is possible since both are units. Since then also $\gcd(a, N) = 1$, we can choose $c, d \in \Z$ such that $ad - Nc = 1$. Then $ds \equiv dar \equiv r \pmod{N}$, so if we set $m := \frac{ds-r}{N}$, then $m\in \Z$. We define a map 
\begin{align*}
\Phi \colon G_{n,s} \times \Z &\to G_{n,r} \times \Z \\ 
y_i &\mapsto x_i z_r^m u^{-cs}, \\
z_s &\mapsto z_r^d u^{-cN},\\
v &\mapsto z_r^{-1} u^a.
\end{align*}
We prove this is a homomorphism. Of course, all braid relations of $G_{n,s} \times \Z$ are preserved, since every $y_i$ is sent to $x_i$ multiplied by the same central element $z_r^m u^{-cs}$. Likewise, all centrality relations are preserved; that is, $\Phi(z_s)$ and $\Phi(v)$ are both central in $G_{n,r} \times \Z$. Thus we must only check that the relation $(y_1 \cdots y_{n-1})^n = z_s^s$ is preserved. We apply the map and find 
\begin{align*}
\Phi((y_1 \cdots y_{n-1})^n) = (x_1 z_r^m u^{-cs} \cdots x_{n-1} z_r^m u^{-cs})^n &= (x_1 \cdots x_{n-1})^n (z_r^m u^{-cs})^{n(n-1)} \\
&= z_r^r \cdot (z_r^m u^{-cs})^{N} = z_r^{r + mN} u^{-csN}
\end{align*}
using centrality of $z_r$ and $u$, and $N = n(n-1)$. But $mN = ds - r$ by definition of $m$, so $z_r^{r+mN} u^{-csN} = z_r^{ds} u^{-csN} = (z_r^d u^{-cN})^s$, which is precisely $\Phi(z_s^s)$. Thus $\Phi$ is a homomorphism. 

To see that $\Phi$ is an isomorphism, we could either note that all the above steps are reversible with some elementary number theory, giving an explicit map $G_{n,r} \times \Z \to G_{n,s} \times \Z$, but it is quicker to proceed as follows: first, $\Phi$ induces the identity on the quotient $Q_n = (G_{n,s} \times \Z) / \langle z_s, v \rangle$. On the other hand, $\Phi$ induces a map $\langle z_s, v \rangle \to \langle z_r, u \rangle$ on the central kernels, both of which are isomorphic to $\Z^2$. The map is represented, with respect to these ordered bases, by the matrix 
\[
\begin{pmatrix}
d & -1 \\ -cN & a
\end{pmatrix} \in \SL_2(\Z), \quad \text{since $ad - Nc = 1$,}
\]
and this being invertible, it is an automorphism of $\Z^2$. Hence $\Phi$ is an isomorphism, since it is an isomorphism on the central kernel and induces the identity on the quotient. Thus $G_{n,r} \times \Z \cong G_{n,s} \times \Z$, and we have proved Lemma~\ref{Lem:same-quotients} in view of Proposition~\ref{Prop:Baumslag}. 

The only final observation to be made is that each $G_{n,r}$ is indeed residually finite. But this is easy: by the proof of Lemma~\ref{Lem:same-quotients}, we have $G_{n,r} \times \Z \cong G_{n,1} \times \Z \cong \B_n \times \Z$. Magnus \cite{Magnus1969} proved that $\B_n$ is residually finite, and hence so too is $\B_n \times \Z \cong G_{n,r} \times \Z$. Since residual finiteness is preserved by taking subgroups, this proves that $G_{n,r}$ is residually finite. Thus, putting Lemma~\ref{Lem:not-isomorphic} and Lemma~\ref{Lem:same-quotients} together now completes the proof of our Theorem. \qed

\clearpage

\section{}

\noindent Concretely, if we take $n=4$, we get $N = n(n-1) = 12$. Taking $r = 5 \in (\Z / 12\Z)^\times$, we get 
\[
G_{4,5} = \pres{}{a,b,c,z}{aba = bab, \: bcb = cbc, \: [a,c]=1, \: z \text{ central}, \: (abc)^4 = z^5}
\]
which thus has the same finite quotients as $\B_4 \cong G_{4,1}$, but it is not isomorphic to $\B_4$. A lower bound for the size of the genus $\mathcal{C}(\B_n)$ is by Lemma~\ref{Lem:not-isomorphic} precisely half the number of units in $\Z / N\Z$, and hence $|\mathcal{C}(\B_n)| \geq \frac{\varphi( n(n-1))}{2}>1$, since $n \geq 4$. Note that when $n=3$, there is only one $G_{n,r}$ up to isomorphism, and hence we cannot conclude anything about the profinite rigidity of $\B_3$. It would be very interesting to know whether $\mathcal{C}(\B_n)$ is infinite or not, as the constructions in this note can only produce finite genera. 

We remark that Baumslag's result has also been used by Hempel \cite{Hempel2014} to obtain families of non-isomorphic groups with the same profinite completion arising as the fundamental group of Seifert fibered spaces with zero rational Euler number. Finally, we note that it is very plausible that same technique can be applied to some other Artin groups, particularly the Artin groups of type $D_n$, in much the same manner as in this article. It would be interesting to know precisely which infinite Artin groups are profinitely rigid; in the right-angled case, the recent work of Corson, Hughes, M\"oller, and Varghese \cite{Corson2026} is particularly relevant. 

\section*{Acknowledgements} 
\noindent I wish to thank Martin Bridson, Sam Hughes, and Alan Reid for their helpful comments and pointers to the literature. My initial proof of Lemma~\ref{Lem:not-isomorphic} was simplified by an observation made by GPT-5.6 Sol accessed via ChatGPT Pro. Every word in the article was written by its (human) author.

\bibliographystyle{amsalpha}
\bibliography{BraidGroupsNotPR.bib}

@article{Andrew2026,
    author = {Andrew, Naomi and Hensel, Sebastian and Hughes, Sam and Wade, Richard D.},
    title = {Problems on handlebody groups},
    journal = {Royal Society Open Science},
    volume = {13},
    number = {2},
    pages = {250502},
    year = {2026},
    issn = {2054-5703},
    doi = {10.1098/rsos.250502},
    url = {https://doi.org/10.1098/rsos.250502},
}

@article {Baumslag1974,
    AUTHOR = {Baumslag, Gilbert},
     TITLE = {Residually finite groups with the same finite images},
   JOURNAL = {Compositio Math.},
  FJOURNAL = {Compositio Mathematica},
    VOLUME = {29},
      YEAR = {1974},
     PAGES = {249--252},
      ISSN = {0010-437X,1570-5846},
   MRCLASS = {20E25},
  MRNUMBER = {357615},
MRREVIEWER = {A.\ H.\ Rhemtulla},
}

@article {Bell2006,
    AUTHOR = {Bell, Robert W. and Margalit, Dan},
     TITLE = {Braid groups and the co-{H}opfian property},
   JOURNAL = {J. Algebra},
  FJOURNAL = {Journal of Algebra},
    VOLUME = {303},
      YEAR = {2006},
    NUMBER = {1},
     PAGES = {275--294},
      ISSN = {0021-8693,1090-266X},
   MRCLASS = {20F36 (20F38 57M07)},
  MRNUMBER = {2253663},
MRREVIEWER = {Luisa\ Paoluzzi},
       DOI = {10.1016/j.jalgebra.2005.10.038},
       URL = {https://doi.org/10.1016/j.jalgebra.2005.10.038},
}

@article {Bridson2017,
    AUTHOR = {Bridson, Martin R. and Reid, Alan W. and Wilton, Henry},
     TITLE = {Profinite rigidity and surface bundles over the circle},
   JOURNAL = {Bull. Lond. Math. Soc.},
  FJOURNAL = {Bulletin of the London Mathematical Society},
    VOLUME = {49},
      YEAR = {2017},
    NUMBER = {5},
     PAGES = {831--841},
      ISSN = {0024-6093,1469-2120},
   MRCLASS = {57M27 (20E18 20E26)},
  MRNUMBER = {3742450},
MRREVIEWER = {Steffen\ Kionke},
       DOI = {10.1112/blms.12076},
       URL = {https://doi.org/10.1112/blms.12076},
}

@article {BMRS,
    AUTHOR = {Bridson, M. R. and McReynolds, D. B. and Reid, A. W. and
              Spitler, R.},
     TITLE = {Absolute profinite rigidity and hyperbolic geometry},
   JOURNAL = {Ann. of Math. (2)},
  FJOURNAL = {Annals of Mathematics. Second Series},
    VOLUME = {192},
      YEAR = {2020},
    NUMBER = {3},
     PAGES = {679--719},
      ISSN = {0003-486X,1939-8980},
   MRCLASS = {57M50 (11F06 20E18 20H10)},
  MRNUMBER = {4172619},
       DOI = {10.4007/annals.2020.192.3.1},
       URL = {https://doi.org/10.4007/annals.2020.192.3.1},
}

@article {BMRS21,
    AUTHOR = {Bridson, Martin R. and McReynolds, D. B. and Reid, Alan W. and
              Spitler, Ryan},
     TITLE = {On the profinite rigidity of triangle groups},
   JOURNAL = {Bull. Lond. Math. Soc.},
  FJOURNAL = {Bulletin of the London Mathematical Society},
    VOLUME = {53},
      YEAR = {2021},
    NUMBER = {6},
     PAGES = {1849--1862},
      ISSN = {0024-6093,1469-2120},
   MRCLASS = {20H10 (11F06)},
  MRNUMBER = {4386043},
MRREVIEWER = {Alexander\ W.\ Mason},
       DOI = {10.1112/blms.12546},
       URL = {https://doi.org/10.1112/blms.12546},
}

@article {Corson2026,
    AUTHOR = {Corson, Samuel M. and Hughes, Sam and M\"{o}ller, Philip and
              Varghese, Olga},
     TITLE = {Higman-{T}hompson groups and profinite properties of
              right-angled {C}oxeter groups},
   JOURNAL = {Selecta Math. (N.S.)},
  FJOURNAL = {Selecta Mathematica. New Series},
    VOLUME = {32},
      YEAR = {2026},
    NUMBER = {2},
     PAGES = {Paper No. 28, 23},
      ISSN = {1022-1824,1420-9020},
   MRCLASS = {20F55 (20E18 20E36 20E45 20F65)},
  MRNUMBER = {5043176},
       DOI = {10.1007/s00029-026-01130-4},
       URL = {https://doi.org/10.1007/s00029-026-01130-4},
}

@article {Dixon1982,
    AUTHOR = {Dixon, John D. and Formanek, Edward W. and Poland, John C. and
              Ribes, Luis},
     TITLE = {Profinite completions and isomorphic finite quotients},
   JOURNAL = {J. Pure Appl. Algebra},
  FJOURNAL = {Journal of Pure and Applied Algebra},
    VOLUME = {23},
      YEAR = {1982},
    NUMBER = {3},
     PAGES = {227--231},
      ISSN = {0022-4049,1873-1376},
   MRCLASS = {20E18},
  MRNUMBER = {644274},
MRREVIEWER = {S.\ P.\ Demushkin},
       DOI = {10.1016/0022-4049(82)90098-6},
       URL = {https://doi.org/10.1016/0022-4049(82)90098-6},
}

@article {Hempel2014,
	AUTHOR = {Hempel, John},
	TITLE = {Some $3$-manifold groups with the same finite quotients},
	JOURNAL = {arXiv preprint},
	FJOURNAL = {arXiv preprint},
	YEAR = {2014},
	NOTE = {available on arXiv:1409.3509},
}

@article {Hughes2025,
    AUTHOR = {Hughes, Sam and Kudlinska, Monika},
     TITLE = {On profinite rigidity amongst free-by-cyclic groups {I}: {T}he
              generic case},
   JOURNAL = {Proc. Lond. Math. Soc. (3)},
  FJOURNAL = {Proceedings of the London Mathematical Society. Third Series},
    VOLUME = {130},
      YEAR = {2025},
    NUMBER = {6},
     PAGES = {Paper No. e70059, 43},
      ISSN = {0024-6115,1460-244X},
   MRCLASS = {20E18 (20E26 20E36 20F65 20F67 20J05 20J06)},
  MRNUMBER = {4920397},
MRREVIEWER = {Benjamin\ Klopsch},
       DOI = {10.1112/plms.70059},
       URL = {https://doi.org/10.1112/plms.70059},
}

@article {Kolay2023,
    AUTHOR = {Kolay, Sudipta},
     TITLE = {Smallest noncyclic quotients of braid and mapping class
              groups},
   JOURNAL = {Geom. Topol.},
  FJOURNAL = {Geometry \& Topology},
    VOLUME = {27},
      YEAR = {2023},
    NUMBER = {6},
     PAGES = {2479--2496},
      ISSN = {1465-3060,1364-0380},
   MRCLASS = {20F36 (20F65 57K20)},
  MRNUMBER = {4634752},
MRREVIEWER = {Clement\ Radu\ Popescu},
       DOI = {10.2140/gt.2023.27.2479},
       URL = {https://doi.org/10.2140/gt.2023.27.2479},
}

@article {Magnus1969,
    AUTHOR = {Magnus, W.},
     TITLE = {Residually finite groups},
   JOURNAL = {Bull. Amer. Math. Soc.},
  FJOURNAL = {Bulletin of the American Mathematical Society},
    VOLUME = {75},
      YEAR = {1969},
     PAGES = {305--316},
      ISSN = {0002-9904},
   MRCLASS = {20.27},
  MRNUMBER = {241525},
MRREVIEWER = {R.\ H.\ Fox},
       DOI = {10.1090/S0002-9904-1969-12149-X},
       URL = {https://doi.org/10.1090/S0002-9904-1969-12149-X},
}

@article {Nikolov2007,
    AUTHOR = {Nikolov, Nikolay and Segal, Dan},
     TITLE = {On finitely generated profinite groups. {I}. {S}trong
              completeness and uniform bounds},
   JOURNAL = {Ann. of Math. (2)},
  FJOURNAL = {Annals of Mathematics. Second Series},
    VOLUME = {165},
      YEAR = {2007},
    NUMBER = {1},
     PAGES = {171--238},
      ISSN = {0003-486X,1939-8980},
   MRCLASS = {20E18 (20E32 20F12)},
  MRNUMBER = {2276769},
MRREVIEWER = {Benjamin\ Klopsch},
       DOI = {10.4007/annals.2007.165.171},
       URL = {https://doi.org/10.4007/annals.2007.165.171},
}

@article {Scherich2023,
    AUTHOR = {Scherich, Nancy and Verberne, Yvon},
     TITLE = {Finite image homomorphisms of the braid group and its
              generalizations},
   JOURNAL = {Glasg. Math. J.},
  FJOURNAL = {Glasgow Mathematical Journal},
    VOLUME = {65},
      YEAR = {2023},
    NUMBER = {2},
     PAGES = {430--445},
      ISSN = {0017-0895,1469-509X},
   MRCLASS = {20F36 (57K12)},
  MRNUMBER = {4625994},
MRREVIEWER = {Nick\ Salter},
       DOI = {10.1017/S0017089523000022},
       URL = {https://doi.org/10.1017/S0017089523000022},
}

\end{document}